%
\documentclass[11pt]{llncs}
\pagestyle{headings}
\usepackage{amsmath,amssymb,mathtools}
\usepackage[all]{xy}
\usepackage{fullpage}
\setlength{\headsep}{20pt}
\newcommand*{\EnsembleQuotient}[2]%
{\ensuremath{%
    #1/\!\raisebox{-.65ex}{\ensuremath{#2}}}}

\DeclareMathAlphabet{\mathpzc}{OT1}{pzc}{m}{it}
\mathchardef\mhyphen="2D

\begin{document}
%
%
\pagestyle{headings}  
%
%
\title{Linear induction algebra and a normal form for linear operators}
\titlerunning{Linear induction algebra}  
%
\author{Laurent Poinsot}
\authorrunning{Laurent Poinsot} 
%
%
\institute{Universit\'e Paris 13, Sorbonne Paris Cit\'e, LIPN, CNRS (UMR 7030), France,\\
\email{laurent.poinsot@lipn.univ-paris13.fr},\\ 
\texttt{http://lipn.univ-paris13.fr/\~{}poinsot/}
}

\maketitle              

\begin{abstract}
The set of natural integers is fundamental for at least two reasons: it is the free induction algebra over the empty set (and at such allows definitions of maps by primitive recursion) and it is the free monoid over a one-element set, the latter structure being a consequence of the former. In this contribution, we study the corresponding structure in the linear setting, \emph{i.e.} in the category of modules over a commutative ring rather than in the category of sets, namely the free module generated by the integers. It also provides free structures of induction algebra and of monoid (in the category of modules). Moreover we prove that each of its linear endomorphisms admits a unique normal form, explicitly constructed, as a non-commutative formal power series.
\keywords{Universal algebra, free module, recursion theory,  formal power series, infinite sums.}\\
\quad\\
\textbf{Mathematics Subject Classification (2010) 03C05, 08B20, 18D35}
\end{abstract}

\section{Overview}
The set of natural integers is fundamental for at least two reasons: it is the free induction algebra over the empty set (and at such allows definitions of maps by primitive recursion) and it is the free monoid over a one-element set, the latter structure being a consequence of the former. It is possible to define a similar object, with similar properties, in the category of modules over some commutative ring $R$ (with a unit), namely the free $R$-module $V$ generated by $\mathbb{N}$. We prove that this module inherits from the integers a structure of initial $R$-linear induction algebra, and also of free $R$-linear monoid (a usual $R$-algebra). General definitions of varieties of algebraic structures (in the setting of universal algebra) in the category of $R$-modules, rather than set-based, are given in section~\ref{basic} together with some results concerning the relations between a set-theoretic algebra and its $R$-linear counterpart. These results are applied to $V$ in section~\ref{linear-induction-algebra}, and allow us to outline a theory of $R$-linear recursive functions, and to provide relations between the (free) monoid structure of $V$ and well-known usual algebraic constructions (polynomials, tensor algebra and algebra of a monoid). Finally in section~\ref{normal-form} we prove that any $R$-linear endomorphism of $V$ may be written uniquely as an infinite sum, and so admits a unique \emph{normal form} as a non-commutative formal power series. 

\section{Linear universal algebra}\label{basic}
In this contribution are assumed known some basic notions about category theory and universal algebra that may be found in any textbooks (\cite{Cohn,MacLane} for instance). We also refer to~\cite{BouAlg} for notions concerning modules and their tensor product. However some of them are recalled hereafter. The basic categories used are the category $\mathpzc{Set}$ of sets (with set-theoretic maps) and the category $R\mhyphen\mathpzc{Mod}$ of modules over some fixed commutative ring $R$ with a unit (and $R$-linear maps).   If $C$ denotes a category and $a,b$ are two objects of this category, then the class of all morphisms from $a$ to $b$ in $C$ is denoted by $C(a,b)$. For instance, if $V,W$ are two $R$-modules, then $R\mhyphen\mathpzc{Mod}(V,W)$ denotes the set of all $R$-linear maps from $V$ to $W$. Let $(\Sigma,\alpha)$ be a (finitary and homogeneous) signature (also called an algebra type or an operator domain), \emph{i.e.}, a set  $\Sigma$ (the elements of which are referred to as \emph{symbols} of functions) together with a map $\alpha\colon \Sigma\rightarrow \mathbb{N}$ called the \emph{arity function}. In what follows we simply denote by $\Sigma$ a signature $(\Sigma,\alpha)$, and $\alpha^{-1}(\{\,n\,\})$ is denoted by $\Sigma(n)$. The elements of $\Sigma(0)\subseteq\Sigma$ with an arity of zero  are called \emph{symbols of constants}. A \emph{$\Sigma$-algebra}, or algebra of type $\Sigma$, is a pair $(A,F)$ where $A$ is a set and $F$ is a map that associates to each symbol of function $f$ of arity $\alpha(f)=n$ (for each $n$) an actual map $F(f)\colon A^n\rightarrow A$ (we sometimes call $F$ the \emph{$\Sigma$-algebra structure map} of $A$). In particular if $\alpha(c)=0$, then $F(c)$ is identified to an element of $A$ (which explains the term of symbol of constant). An homomorphism between two algebras $(A,F),(B,G)$ over the same signature $\Sigma$ is a set-theoretic map $\phi\colon A\rightarrow B$ such that for every $f\in\Sigma(n)$, and every $a_1,\cdots,a_n\in A$, $\phi(F(f)(a_1,\cdots,a_n))=G(f)(\phi(a_1),\cdots,\phi(a_n))$ (in particular for each $c\in\Sigma(0)$, $\phi(F(c))=G(\phi(c))$). An \emph{isomorphism} is a homomorphism which is also a bijective map. A \emph{sub-algebra} $(B,G)$ of $(A,F)$ is a $\Sigma$-algebra such that the natural inclusion $B\subseteq A$ is a homomorphism of $\Sigma$-algebras. A \emph{congruence} $\cong$ on a $\Sigma$-algebra $(A,F)$ is an equivalence relation on $A$ such that for every $f\in\Sigma(n)$, if $a_i\cong b_i$, $i=1,\cdots,n$, then $F(f)(a_1,\cdots,a_n)\cong F(f)(b_1,\cdots,b_n)$. This implies that the quotient set $\EnsembleQuotient{A}{\cong}$ inherits a natural structure of $\Sigma$-algebra from that of $A$. It is well-known (see~\cite{Cohn}) that such congruences form a lattice, and then for every $R\subseteq A^2$, we may talk about the \emph{least congruence on $A$ generated by $R$} in an evident way. For any set $X$ there exists a \emph{free $\Sigma$-algebra} $\Sigma[X]$ on $X$. It is constructed by induction as follows (it is a subset of the free monoid $(\Sigma\sqcup X)^*$ over $\Sigma\sqcup X$, and the parentheses to form its elements are only used for readability; see~\cite{Cohn}). The base cases: $\Sigma(0)\subseteq \Sigma[X]$ and $X\subseteq \Sigma[X]$, the induction rule: for every $n$, and every $f\in \Sigma(n)$, if $t_1,\cdots,t_n\in \Sigma[X]$, then $f(t_1,\cdots,t_n)\in \Sigma[X]$, and the closure property: it is the least subset of $(\Sigma\sqcup X)^*$ with these two properties. Its structure of $\Sigma$-algebra is the evident one. It is called \emph{free} because for any $\Sigma$-algebra $(A,F)$ and any set-theoretic map $\phi\colon X\rightarrow A$, there exists a unique homomorphism $\widehat{\phi}\colon \Sigma[X]\rightarrow (A,F)$ such that $\widehat{\phi}(x)=\phi(x)$ for every $x\in X$. In category-theoretic terms, this means that the (obvious) forgetful functor from the category of $\Sigma$-algebras to $\mathpzc{Set}$ admits a left adjoint, and this implies that a free algebra is unique up to a unique isomorphism (we can talk about \emph{the} free algebra). 
\begin{example}
The set $\mathbb{N}$, together with the constant $0$ and the usual successor function, is the free induction algebra over the empty set (see for instance~\cite{Cohn}) where we call \emph{induction algebra} any algebra over the signature $\mathpzc{Ind}=\{\, 0,S\,\}$ where $0\in \mathpzc{Ind}(0)$ and $S\in \mathpzc{Ind}(1)$.
\end{example}
A \emph{variety} of $\Sigma$-algebras is a class of algebras closed under homomorphic images, sub-algebras, and direct products. A \emph{law} or \emph{identity} over  $\Sigma$ on the \emph{standard alphabet} $\mathpzc{X}=\{\, x_i\colon i\geq 0\,\}$ is a pair $(u,v)\in \Sigma[\mathpzc{X}]^2$ sometimes written as an equation $u=v$. We say that a law $(u,v)$ \emph{holds} in a $\Sigma$-algebra $(A,F)$, or that $(A,F)$ \emph{satisfies} $(u,v)$, if under every homomorphism $\Sigma[\mathpzc{X}]\rightarrow (A,F)$ the values of $u$ and $v$ coincide. If $E$ is any set of laws in $\Sigma[\mathpzc{X}]$, then $\mathpzc{V}_{\Sigma,E}$ or simply $\mathpzc{V}_{E}$, is the class of all algebras which satisfy all the laws in $E$. By the famous Garrett Birkhoff's theorem, $\mathpzc{V}_{E}$ is a variety and any variety arises in such a way. 
\begin{example}
The variety of all monoids is given by $\mathpzc{V}_{M,E}$ where $M(0)=\{\, 1\,\}$, $M(2)=\{\, \mu\,\}$, $M(n)=\emptyset$ for every $n\not=0,2$, and $E$ consists in the three equations $(\mu(x_1,1),x_1)$, $(\mu(1,x_1),x_1)$ and $(\mu(\mu(x_1,x_2),x_3),\mu(x_1,\mu(x_2,x_3)))$. 
\end{example}
A \emph{free} algebra over a set $X$ in a variety $\mathpzc{V}_{\Sigma,E}$ is a $\Sigma$-algebra $V_X$ in the class $\mathpzc{V}_{\Sigma,E}$, together with a set-theoretic map $i_X\colon X\rightarrow V_X$, such that for every algebra $(A,F)$ in $\mathpzc{V}_{\Sigma,E}$ and every map $\phi\colon X\rightarrow A$, there is a unique homomorphism $\widehat{\phi}\colon V_X\rightarrow (A,F)$ with $\widehat{\phi}\circ i_X=\phi$. Thus the free $\Sigma$-algebra $\Sigma[X]$ is easily seen as a free algebra in the variety $\mathpzc{V}_{\Sigma,\emptyset}$. In category-theoretic terms, when a variety is seen as a category (whose morphisms are the homomorphisms of algebras), this means that the obvious forgetful functor from the variety to $\mathpzc{Set}$ admits a left adjoint. This implies that a free algebra is unique up to a unique isomorphism. Let us see a way to construct it. Let $\cong_E$ be the least congruence of $\Sigma$-algebra on $\Sigma[X]$ generated by the relations $\{\, (\widehat{\sigma}(u),\widehat{\sigma}(v))\colon (u,v)\in E,\ \sigma\colon \mathpzc{X}\rightarrow \Sigma[X]\,\}$ (recall that $\widehat{\sigma}\colon \Sigma[\mathpzc{X}]\rightarrow \Sigma[X]$ is the unique homomorphism of $\Sigma$-algebras that extends $\sigma$). Let $V_X=\EnsembleQuotient{\Sigma[X]}{\cong_E}$ together with its structure of quotient $\Sigma$-algebra inherited from that of $\Sigma[X]$. Let $(B,G)$ be any $\Sigma$-algebra in the variety $\mathpzc{V}_{\Sigma,E}$, and $\phi\colon X\rightarrow B$ be a set-theoretic map. It admits a unique homomorphism extension $\widehat{\phi}\colon \Sigma[X]\rightarrow (B,G)$ since $\Sigma[X]$ is free. Because $(B,G)$ belongs to  $\mathpzc{V}_{\Sigma,E}$ and $\widehat{\phi}\circ \sigma\colon \Sigma[\mathpzc{X}]\rightarrow (B,G)$ is a homomorphism whenever $\sigma\colon \Sigma[\mathpzc{X}]\rightarrow \Sigma[X]$ is so, then for each $(u,v)\in E$, $\widehat{\phi}(\sigma(u))=\widehat{\phi}(\sigma(v))$. Therefore $\widehat{\phi}$ passes to the quotient by $\cong_E$ and defines a homomorphism from $V_X$ to $(B,G)$ as expected.
\begin{example}
For instance $\mathbb{N}$ with its structure of (commutative) monoid is the free algebra in $\mathpzc{V}_{M,E}$ over $\{\, 1\,\}$, while $\mathbb{N}\setminus\{\,0\,\}$ with its multiplicative structure of monoid in the free algebra in $\mathpzc{V}_{M,E}$ over the set of all prime numbers. 
\end{example}
Up to now, we only describe \emph{set-based} algebras. But it is possible to talk about \emph{linear algebras}. For this let us recall some basic facts about modules and their tensor product (see~\cite{BouAlg}). Let $X$ be any set. The free $R$-module generated by $X$ is the $R$-module $RX$ of all formal sums $\sum_{x\in X}\alpha_x x$ ($\alpha_x\in R$) where all but finitely many coefficients $\alpha_x\in R$ are zero (this is the free $R$-module with basis $X$), and for any $x_0\in X$, we refer to the element $e_{x_0}\in RX$, obtained as the formal sum $\sum_{x\in X}\alpha_x x$ with $\alpha_x=0$ for every $x\not=x_0$ and $\alpha_{x_0}=1$ (the unit of $R$), as the \emph{canonical image of $x_0$ into $RX$}, and therefore this defines a one-to-one map $e\colon X\rightarrow RX$ by $e(x)=e_x$. If $W$ is any over $R$-module, then any $R$-linear map $\phi\colon RX\rightarrow W$ is entirely defined by its values on the basis $X$. Let $V_1,\cdots,V_n,W$ be  $R$-modules. A map $\phi\colon V_1\times \cdots \times V_n\rightarrow W$ is said to be \emph{multilinear} (or \emph{bilinear} when $n=2$) if it is linear in each of its variables when the other ones are fixed. Given a multilinear map $\phi\colon V_1\times \cdots \times V_n\rightarrow W$, there is a unique \emph{linear map} $\psi\colon V_1\otimes_R\cdots \otimes_R V_n\rightarrow V$, where $\otimes_R$ denotes the tensor product over $R$ (see~\cite{BouAlg}), such that $\psi\circ q=\phi$ (where $q\colon V_1\times \cdots\times V_n\rightarrow V_1\otimes_R\cdots \otimes_R V_n$ is the canonical multilinear map; the image of $(v_1,\cdots,v_n)$ under $q$ is denoted by $v_1\otimes\cdots \otimes v_n$). In what follows, $\phi$ is referred to as the \emph{multilinear map} associated to $\psi$, and denoted by $\psi_0$. If $V_1, \cdots, V_n$ are free qua $R$-modules with basis $(e^{(j)})_{i\in I_j}$, $j=1,\cdots,n$, then $V_1\otimes_R \cdots \otimes_R V_n$ also is free with basis $\{\, e^{(1)}_{i_1}\otimes \cdots \otimes e^{(n)}_{i_n}\colon i_j\in I_j,\ j=1,\cdots,n\,\}$.  Moreover given a linear map $\phi\colon V_1\rightarrow R\mhyphen\mathpzc{Mod}(V_2,V_3)$, then it determines a unique linear map $\psi\colon V_1\otimes_R V_2\rightarrow V_3$ (it is obtained from the bilinear map $\phi^{\prime}\colon V_1\times V_2 \rightarrow V_3$ given by $\phi^{\prime}(v_1,v_2)=\phi(v_1)(v_2)$). 
\begin{lemma}\label{multi-to-linear}
For every sets $X_1,\cdots, X_n$, $R(X_1\times \cdots \times X_n)$ and $RX_1\otimes_R\cdots \otimes_R RX_n$ are isomorphic $R$-modules.
\end{lemma}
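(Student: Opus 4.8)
The plan is to exhibit an explicit isomorphism between the two free $R$-modules by using the universal properties that both objects enjoy, rather than chasing elements through concrete bases. Both $R(X_1\times\cdots\times X_n)$ and $RX_1\otimes_R\cdots\otimes_R RX_n$ are free $R$-modules; the key observation is that they are free on sets that are in obvious bijection. Indeed, by the stated fact about tensor products of free modules, $RX_1\otimes_R\cdots\otimes_R RX_n$ is free with basis $\{\,e_{x_1}\otimes\cdots\otimes e_{x_n}\colon x_j\in X_j\,\}$, and this basis is indexed by the set $X_1\times\cdots\times X_n$, which is precisely the basis of $R(X_1\times\cdots\times X_n)$.

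First I would define a set-theoretic map $\beta\colon X_1\times\cdots\times X_n\rightarrow RX_1\otimes_R\cdots\otimes_R RX_n$ by $\beta(x_1,\cdots,x_n)=e_{x_1}\otimes\cdots\otimes e_{x_n}$. Since $R(X_1\times\cdots\times X_n)$ is the free $R$-module on $X_1\times\cdots\times X_n$, the universal property (any linear map out of $RX$ is entirely determined by its values on the basis $X$) yields a unique $R$-linear map
\[
\Psi\colon R(X_1\times\cdots\times X_n)\rightarrow RX_1\otimes_R\cdots\otimes_R RX_n
\]
with $\Psi(e_{(x_1,\cdots,x_n)})=e_{x_1}\otimes\cdots\otimes e_{x_n}$. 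This $\Psi$ sends a basis of the source onto a basis of the target, as recalled above.

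To see that $\Psi$ is an isomorphism I would construct its inverse, again via a universal property. Consider the multilinear map $\phi\colon RX_1\times\cdots\times RX_n\rightarrow R(X_1\times\cdots\times X_n)$ determined on basis tuples by $\phi(e_{x_1},\cdots,e_{x_n})=e_{(x_1,\cdots,x_n)}$ and extended multilinearly; this is well defined because each $RX_j$ is free on $X_j$, so a multilinear map is fixed by its values on tuples of basis elements. By the universal property of the tensor product, $\phi$ factors through a unique linear map $\Phi\colon RX_1\otimes_R\cdots\otimes_R RX_n\rightarrow R(X_1\times\cdots\times X_n)$ with $\Phi(e_{x_1}\otimes\cdots\otimes e_{x_n})=e_{(x_1,\cdots,x_n)}$. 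One then checks that $\Phi\circ\Psi$ and $\Psi\circ\Phi$ agree with the respective identity maps on basis elements, hence are the identities by uniqueness in the universal property. Therefore $\Psi$ is an $R$-linear isomorphism.

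The main obstacle—really the only point requiring care—is the well-definedness of the multilinear map $\phi$ giving $\Phi$: one must verify that the assignment on tuples of basis vectors genuinely extends to a multilinear map on the whole product, which relies on the freeness of each factor $RX_j$ and the fact that multilinear maps out of a product of free modules are freely determined by their values on basis tuples. Once this is granted, everything else follows formally from the universal properties of free modules and of the tensor product, and no coefficient computation is needed.
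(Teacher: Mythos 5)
Your proposal is correct and follows essentially the same route as the paper's sketch: the map $\Psi$ is the paper's $q\circ\iota$ and your $\Phi$ is the paper's $\widetilde{s}$, both obtained from the universal property of the tensor product applied to the multilinear map determined on basis tuples. The only cosmetic difference is that you also observe $\Psi$ carries a basis onto a basis, which would already suffice by itself.
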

\begin{proof}(Sketch) 
It is clear that $R(X_1\times \cdots \times X_n)$ is identified as a sub-module of $R(RX_1\times \cdots \times RX_n)$ by $\iota\colon (x_1,\cdots,x_n)\mapsto e(e(x_1),\cdots,e(x_n))$. 
Let $q\circ \iota\colon R(X_1\times \cdots \times X_n)\rightarrow RX_1\otimes_R\cdots \otimes_R RX_n$ be the restriction of the canonical multilinear map (it is clearly onto and is easily shown to be $R$-linear), and $s\colon RX_1\times \cdots \times RX_n\rightarrow R(X_1\times \cdots \times X_n)$ be the multilinear map given by $s(e(x_1),\cdots,e(x_n))=e(x_1,\cdots,x_n)$ for every $x_i\in X_i$, $i=1,\cdots,n$. Therefore it gives rise to a unique linear map $\widetilde{s}\colon RX_1\otimes_R \cdots \otimes_R RX_n\rightarrow R(X_1\times \cdots \times X_n)$. It is easy to see that $\widetilde{s}\circ q=id$, but $q$ is onto so that it is an $R$-linear isomorphism (the details are left to the reader). \hfill $\qed$
\end{proof}
From lemma~\ref{multi-to-linear}, it follows that any set-theoretic map $\phi\colon X_1\times\cdots \times X_n\rightarrow W$ may be extended in a unique way to a linear map $\widetilde{\phi}\colon RX_1\otimes_R \cdots \otimes_R RX_n \rightarrow W$. Following the notations from the proof of lemma~\ref{multi-to-linear}, $\phi\colon X_1\times\cdots \times X_n\rightarrow W$ is first freely extended to a $R$-linear map $\phi\colon R(X_1\times \cdots\times X_n)\rightarrow W$, and then $\phi\circ \widehat{s}\colon RX_1\otimes_R \cdots \otimes_R RX_n\rightarrow W$ is the expected linear map $\widetilde{\phi}$. Moreover its associated multilinear map $\widetilde{\phi}_0\colon RX_1\times \cdots \times RX_n\rightarrow W$ is sometimes referred to as the \emph{extension of $\phi$ by multilinearity}. We are now in position to introduce $R$-linear $\Sigma$-algebras and varieties. Let $\Sigma$ be an operator domain, and $R$ be a commutative ring with a unit. A \emph{$R$-linear $\Sigma$-algebra} is a $R$-module given a structure of $\Sigma$-algebra such that all operations are $R$-multilinear. More precisely it is a $R$-module $V$ with a $\Sigma$-algebra structure map $F$ such that for each $f\in \Sigma(n)$ ($n\geq 0$), $F(f)\colon \underbrace{V\otimes_R \cdots \otimes_R V}_{n\ \mathit{factors}}\rightarrow V$ is $R$-linear. Following~\cite{Bergman}, if $V$ is a $R$-linear $\Sigma$-algebra, let $\mathcal{U}(V)$ denote its underlying (set-theoretic) $\Sigma$-algebra (its structure of $\Sigma$-algebra is given by the multilinear map $F_0(f)\colon V\times \cdots \times V \rightarrow V$ associated to $F(f)$), and if $(A,F)$ is a usual $\Sigma$-algebra, let $(RA,\widetilde{F})$ denote the $R$-linear $\Sigma$-algebra made from the free $R$-module $RA$ on $A$ by extending the $\Sigma$-operation $F(f)$, $f\in \Sigma(n)$, of $A$ by multilinearity. More precisely, $\widetilde{F}(f)\colon RA\otimes_R\cdots\otimes_R RA\rightarrow RA$ is the unique linear map obtained from lemma~\ref{multi-to-linear}. It is given by $\widetilde{F}(f)(e(a_1)\otimes\cdots\otimes e(a_n))=e(F(f)(a_1,\cdots,a_n))$ for each $a_1,\cdots,a_n\in A$ (this map is well defined since $\{\, e(a_1)\otimes \cdots \otimes e(a_n)\colon a_1,\cdots,a_n\in A\,\}$ forms a basis over $R$ of $RA$). (According to the above discussion, this is equivalent to a multilinear map $\widetilde{F}_0(f)\colon RA^n \rightarrow RA$ with $\widetilde{F}_0(f)(e(a_1,\cdots,a_n))=e(F(f)(a_1,\cdots,a_n))$.) Actually we obtain a \emph{functorial} correspondence between $\Sigma$-algebras and $R$-linear $\Sigma$-algebras: the forgetful functor $\mathcal{U}$ admits a left adjoint given by the construction $RA$. More precisely, given a $R$-linear $\Sigma$-algebra $(W,G)$, and a homomorphism  $\phi\colon (A,F)\rightarrow \mathcal{U}(W,G)$, $\widetilde{\phi}\colon (RA,\widetilde{F})\rightarrow (W,G)$, given by $\widetilde{\phi}(e(a))=\phi(a)$ for each $a\in A$, is the unique extension of $\phi$ which is a homomorphism of $R$-linear $\Sigma$-algebras (this means that $\widetilde{\phi}$ is $R$-linear, and $\widetilde{\phi}(\widetilde{F}(f)(x_1\otimes \cdots \otimes x_n))=G(f)(\widetilde{\phi}(x_1)\otimes \cdots\otimes \widetilde{\phi}(x_n))$ for every $x_1,\cdots,x_n\in RA$. To determine such a correspondence between varieties and linear varieties we must be more careful due to multilinearity. A law $u=v$ on $\mathpzc{X}$ is said to be \emph{regular} when the same elements of $\mathpzc{X}$ occur in $u$ and $v$, and exactly once in both of them. For instance, $\mu(x_1,1)=x_1$, $\mu(\mu(x_1,x_2),x_3)=\mu(x_1,\mu(x_2,x_3))$ are regular laws. If $E$ is any set of regular equations on $\Sigma[\mathpzc{X}]$, and $(V,F)$ is a $R$-linear $\Sigma$-algebra, then we say that $(V,F)$ \emph{satisfies} $E$ when under all homomorphisms $\Sigma[\mathpzc{X}]\rightarrow \mathcal{U}(V)$, the images of $u$ and of $v$ are equal for each $(u,v)\in E$. If $E$ is any set of regular equations on $\Sigma[\mathpzc{X}]$, then there is a very close connection between the variety $\mathpzc{V}_{\Sigma,E}$ of $\Sigma$-algebras satisfying $E$, and the variety $\mathpzc{V}_{\Sigma,R,E}$ of $R$-linear $\Sigma$-algebras satisfying $E$: it is easy to see that a $R$-linear $\Sigma$-algebra $V$ will lie in $\mathpzc{V}_{\Sigma,R,E}$ if, and only if, $\mathcal{U}(V)$ lies in $\mathpzc{V}_{\Sigma,R,E}$. Conversely, according to~\cite{Bergman}, a $\Sigma$-algebra $(A,F)$ will lie in $\mathpzc{V}_{\Sigma,E}$ if, and only if, $RA$ lies in $\mathpzc{V}_{\Sigma,R,E}$. A \emph{free} $R$-linear algebra in $\mathpzc{V}_{\Sigma,R,E}$ over a set (resp. a $R$-module, resp. a $\Sigma$-algebra in the variety $\mathpzc{V}_{\Sigma,E}$) $X$  is a $R$-linear $\Sigma$-algebra $V_X$ in the variety $\mathpzc{V}_{\Sigma,R,E}$ with a set-theoretic map (resp. a $R$-linear map, resp. a homomorphism) $j_X\colon X\rightarrow V_X$ (called the \emph{canonical map}) such that for all $R$-linear algebra $W$ in $\mathpzc{V}_{\Sigma,R,E}$ and all set-theoretic map (resp. $R$-linear map, resp. homomorphism) $\phi\colon X\rightarrow W$ there is a unique homomorphism $\widehat{\phi}\colon V_X\rightarrow W$ of $R$-linear algebras  such that $\widehat{\phi}\circ j_X=\phi$. Such a free algebra is unique up to a unique isomorphism. As an example, the free $R$-linear algebra in  $\mathpzc{V}_{\Sigma,R,E}$ over a $R$-module $W$ is made as follows. Let us assume that the free $R$-linear algebra $V_W$  on the underlying \emph{set} $W$ is constructed with the \emph{set-theoretic} map $j_W\colon W\rightarrow V_W$ (we see in lemma~\ref{freelinearalgebra} that it always exists). Let $F$ be the $\Sigma$-algebra structure map of $V_W$ (this means that $F(f)$ is a linear map from $V_W\otimes_R\cdots \otimes_R V_W\rightarrow V_W$ for each $f\in \Sigma$). Let $\overline{W}$ be the least sub-module of $V_W$ \emph{stable under all $F(f)$'s} (this means that the image of $\overline{W}\otimes_R \cdots\otimes_R \overline{W}$ by all $F(f)$'s lies into $\overline{W}$) and that contains the sub-module generated by $j_W(w_1+w_2)-j_W(w_1)-j_W(w_2)$, $j_W(\alpha w)-\alpha j_W(w)$ for every $\alpha\in R$, $w_1,w_2,w\in W$. Then it is easily seen that the quotient module $\EnsembleQuotient{V_W}{\overline{W}}$ inherits a structure of $R$-linear $\Sigma$-algebra from that of $V_W$, and is the expected free algebra (where the canonical map is the composition of the natural epimorphism $V_W\rightarrow \EnsembleQuotient{V_W}{\overline{W}}$ with the set-theoretic canonical map $W\rightarrow V_W$). 
\begin{remark}
These three notions of free algebras (over a set, a module or a $\Sigma$-algebra in the variety $\mathpzc{V}_{\Sigma,E}$) come from the fact that there are three forgetful functors, and each of them admits a left adjoint. 
\end{remark}
 
\begin{lemma}\label{freelinearalgebra}
Let $E$ be a set of regular equations. Let $X$ be a set and $V_X$ be the free $\Sigma$-algebra over $X$ in the variety $\mathpzc{V}_{\Sigma,E}$ with $i_X\colon X\rightarrow V_X$. Then, $RV_X$ with $j_X\colon X\rightarrow RV_X$ given by $j_X(x)=e(i_X(x))$  is the free $R$-linear $\Sigma$-algebra over $X$ in $\mathpzc{V}_{\Sigma,R,E}$. Moreover, $RV_X$, with the $R$-linear map $k_{RX}\colon RX\rightarrow RV_X$ defined by $h_{RX}(e_x)=e(i_X(x))$ for every $x\in X$, is the free $R$-linear $\Sigma$-algebra (in $\mathpzc{V}_{\Sigma,R,E}$) over $RX$. Finally, let $k_{V_X}\colon V_X\rightarrow RV_X$ be the unique homomorphism such that $k_{V_X}\circ i_X=j_X=e\circ i_X$. Then, $RV_X$ with $h_{V_X}$ is free over $V_X$. 
\end{lemma}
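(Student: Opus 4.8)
The plan is to read all three assertions as facets of one fact: a composite of functors, each admitting a left adjoint, again admits a left adjoint, specialised to the tower of forgetful functors lying over $\mathpzc{V}_{\Sigma,R,E}$. The decisive input is already in hand. Since $E$ consists of \emph{regular} equations and $V_X\in\mathpzc{V}_{\Sigma,E}$ (being the free algebra there), the result of Bergman recalled above gives $RV_X\in\mathpzc{V}_{\Sigma,R,E}$, so $RV_X$ is a legitimate object of the linear variety; symmetrically, $\mathcal{U}(W)\in\mathpzc{V}_{\Sigma,E}$ for every $W\in\mathpzc{V}_{\Sigma,R,E}$. Consequently the adjunction $R(-)\dashv\mathcal{U}$ built above restricts to an adjunction between the two varieties, and this restricted adjunction is what I would invoke repeatedly.

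For the first assertion I would verify the universal property directly. Given a set-theoretic map $\phi\colon X\rightarrow W$ with $W\in\mathpzc{V}_{\Sigma,R,E}$, freeness of $V_X$ over $X$ in $\mathpzc{V}_{\Sigma,E}$ yields a unique $\Sigma$-homomorphism $\phi^{\flat}\colon V_X\rightarrow\mathcal{U}(W)$ with $\phi^{\flat}\circ i_X=\phi$; the restricted adjunction then converts $\phi^{\flat}$ into a unique homomorphism $\widehat{\phi}\colon RV_X\rightarrow W$ of $R$-linear $\Sigma$-algebras with $\widehat{\phi}(e(a))=\phi^{\flat}(a)$ for all $a\in V_X$. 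Evaluating on the canonical map gives $\widehat{\phi}(j_X(x))=\widehat{\phi}(e(i_X(x)))=\phi^{\flat}(i_X(x))=\phi(x)$, and uniqueness of $\widehat{\phi}$ follows from the uniqueness at each of the two stages. This says exactly that $X\mapsto RV_X$ is left adjoint to the composite forgetful functor $\mathpzc{V}_{\Sigma,R,E}\rightarrow\mathpzc{Set}$.

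The second assertion I would then derive from the first together with the freeness of $RX$ over the set $X$ in $R\mhyphen\mathpzc{Mod}$. Given an $R$-linear map $\psi\colon RX\rightarrow W$, I would apply the first assertion to the set-theoretic map $\psi\circ e\colon X\rightarrow W$, obtaining a unique $R$-linear homomorphism $\widehat{\psi}\colon RV_X\rightarrow W$ with $\widehat{\psi}\circ j_X=\psi\circ e$. Because $j_X=k_{RX}\circ e$ by the defining relation $k_{RX}(e_x)=e(i_X(x))$, the two $R$-linear maps $\widehat{\psi}\circ k_{RX}$ and $\psi$ agree on the basis $\{\,e_x\,\}$ of $RX$, hence coincide; and if $\widehat{\psi}'$ is another such homomorphism then $\widehat{\psi}'\circ j_X=\widehat{\psi}'\circ k_{RX}\circ e=\psi\circ e$, so uniqueness is inherited from the first assertion.

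The third assertion is nothing but the restricted adjunction $R(-)\dashv\mathcal{U}$ read off at the object $V_X$: every $\Sigma$-homomorphism $V_X\rightarrow\mathcal{U}(W)$ extends uniquely to a homomorphism $RV_X\rightarrow W$ of $R$-linear algebras along the unit. Here the only thing to check is that $k_{V_X}$ is that unit $e\colon V_X\rightarrow\mathcal{U}(RV_X)$, $a\mapsto e(a)$; indeed $e$ is a $\Sigma$-homomorphism since $\widetilde{F}(f)(e(a_1)\otimes\cdots\otimes e(a_n))=e(F(f)(a_1,\cdots,a_n))$, and $e\circ i_X=j_X$, so $k_{V_X}=e$ by uniqueness of the extension. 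The point demanding care throughout is the regularity of $E$: without it $RV_X$ need not belong to $\mathpzc{V}_{\Sigma,R,E}$ and the restricted adjunction — on which all three arguments rest — fails; everything else is the routine bookkeeping of composing universal properties.
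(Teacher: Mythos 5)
Your proposal is correct and follows essentially the same route as the paper: both rest on Bergman's regularity result to place $RV_X$ in $\mathpzc{V}_{\Sigma,R,E}$ and then compose the two universal properties (freeness of $V_X$ over $X$ in $\mathpzc{V}_{\Sigma,E}$, then the free-module/adjunction step $R(-)\dashv\mathcal{U}$), deriving the second and third assertions from the first. The only cosmetic difference is that you read the third assertion directly off the adjunction by identifying $k_{V_X}$ with the unit $e$, whereas the paper re-derives it from freeness over $X$; both are sound.
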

\begin{proof}
(The proof of this lemma is easy for a category theorist or universal algebraist but is given for the sake of completeness.) 
Let $(W,G)$ be a $R$-linear $\Sigma$-algebra, and $\phi\colon X\rightarrow W$ be a set-theoretic map. Then, there exists a unique homomorphism of $\Sigma$-algebras $\widehat{\phi}\colon V_X\rightarrow \mathcal{U}(W)$ such that $\widehat{\phi}\circ i_X=\phi$. Since $RV_X$ is free with basis $V_X$ over $R$, there is a unique $R$-linear map $\psi\colon RV_X\rightarrow W$ such that $\psi\circ e=\widehat{\phi}$ (so $\psi\circ j_X=\psi\circ e\circ j_X=\widehat{\phi}\circ j_X=\phi$). Moreover from  the above discussion we know that $RV_X$ is a $R$-linear $\Sigma$-algebra of the variety $\mathpzc{V}_{\Sigma,R,E}$. It remains to prove that $\psi$ is a homomorphism of $\Sigma$-algebra from $V_X$ to $W$. Let $f\in\Sigma(n)$, and $a_1,\cdots,a_n\in V_X$. Let $F$ be the $\Sigma$-algebra structure map of $V_X$. We have $\psi(\widetilde{F}(f)(e(a_1)\otimes \cdots \otimes e(a_n)))=\psi(e(F(f)(a_1,\cdots,a_n)))
=\widehat{\phi}(F(f)(a_1,\cdots,a_n))=G(f)(\widehat{\phi}(a_1),\cdots,\widehat{\phi}(a_n))
=G(f)(\psi(e(a_1))\otimes\cdots \otimes \psi(e(a_n)))$, for each $a_1,\cdots,a_n\in V_X$. Now, let $\phi\colon RX\rightarrow W$ be any $R$-linear map (where $W$ is a $R$-linear $\Sigma$-algebra in the variety $\mathpzc{V}_{\Sigma,R,E}$). Then, there exists a unique set-theoretic map $\phi_0\colon X\rightarrow W$ such that $\phi_0(x)=\phi(e(x))$ for every $x\in X$. Therefore there exists a unique homomorphism of $\Sigma$-algebras $\widehat{\phi}_0\colon V_X\rightarrow W$ such that $\widehat{\phi}_0\circ i_X=\phi_0$. Finally, there exists a unique $R$-linear map, wich is also a homomorphism of $\Sigma$-algebras $\psi\colon RV_X\rightarrow W$ such that $\psi\circ e=\widehat{\phi}_0$. Then, $\phi_0=\psi\circ j_X=\psi\circ e\circ i_X=\psi\circ h_{RX}\circ e$. But $\phi\circ e=\phi_0$, and both maps $\phi$ and $\psi\circ h_{RX}$ are $R$-linear and equal on basis elements of $RX$, so that they are equal on $RX$ as expected. Finally, let $\phi\colon V_X\rightarrow \mathcal{U}(W)$ be a homomorphism of $\Sigma$-algebras. Then, there exists a unique set-theoretic map $\phi_0\colon X\rightarrow W$ such that $\phi_0=\phi\circ i_X$. Then, there exists a unique homomorphism of $\Sigma$-algebras which is a $R$-linear map $\widehat{\phi}_0\colon RV_X\rightarrow W$ with $\widehat{\phi}_0\circ j_X=\phi_0$. Then, $\widehat{\phi}_0\circ k_{V_X}\circ i_X=\widehat{\phi}_0\circ j_X=\phi_0=\phi\circ i_X$, and since $\widehat{\phi}_0\circ k_{V_X}$ and $\phi$ are both homomorphisms from $V_X$ to $W$ it follows that their are equal (since $V_X$ is free). \hfill $\qed$
\end{proof}

\section{$R$-linear induction algebra}\label{linear-induction-algebra}

\subsection{The initial $R$-linear induction algebra}\label{initial}

The free $R$-module $R\mathbb{N}$ over $\mathbb{N}$ is denoted by $V$. The canonical image of an integer $n$ into $V$ is denoted by $e_n$ so $e_i\not=e_j$ for every $i\not=j$ and $\{\, e_n\colon n\in\mathbb{N}\,\}$ happens to be a basis of $V$ over $R$. The constant $0$ of the signature $\mathpzc{Ind}$ corresponds to $e_0$, and the successor map $s\colon \mathbb{N}\rightarrow \mathbb{N}$ is uniquely extended by $R$-linearity (no need here of multilinearity) to $U\in R\mhyphen\mathpzc{Mod}(V,V)$ defined on the basis elements by $Ue_n=e_{n+1}$, $n\in\mathbb{N}$. It is clear that $(V,e_0,U)$ is a $R$-linear induction algebra, and according to lemma~\ref{freelinearalgebra}, $(V,e_0,U)$ is even the free $R$-linear induction algebra over the empty set, the free $R$-linear induction over the zero vector space, and the free $R$-linear induction over the induction algebra $\mathbb{N}$. We call $(V,U,e_0)$ the \emph{initial} $R$-linear induction algebra because given another $R$-linear induction algebra $(W,w,S)$ ($w\in W$, $S\in R\mhyphen\mathpzc{Mod}(W,W)$), there is a unique $R$-linear map $\phi\colon V\rightarrow W$ such that $\phi(e_0)=w$, and $\phi\circ U=S\circ \phi$. This may be proved directly from the fact that $V$ is free over $(e_n)_{n\in\mathbb{N}}$, and $e_n=U^n(e_0)$ for each $n\in \mathbb{N}$. (Indeed, there is a unique linear map $\phi\colon V\rightarrow W$ such that $\phi(e_n)=S^n(w)$.) 
\begin{remark}
It is obvious that $\mathbb{N}$ is the initial induction algebra (since it is freely generated by the empty set). This means that for each induction algebra $A$, we have a natural isomorphism (see~\cite{MacLane} for a precise definition of this notion) of sets $\mathpzc{V}_{\mathpzc{Ind},\emptyset}(\mathbb{N},A)\cong\mathpzc{Set}(\emptyset,A)=\{\, \emptyset\,\}$ (where the variety  $\mathpzc{V}_{\mathpzc{Ind},\emptyset}$ of all induction algebras is considered as a category). Now, since $V$ is the free $R$-linear induction algebra on $\mathbb{N}$, for every $R$-linear induction algebra $W$, one also has  natural isomorphisms (of sets) $\mathpzc{V}_{\mathpzc{Ind},R,\emptyset}(V,W)\cong \mathpzc{V}_{\mathpzc{Ind},\emptyset}(\mathbb{N},\mathcal{U}(W))\cong \{\, \emptyset\,\}$. 
\end{remark}
For every $n\in\mathbb{N}$, let $V_n$ be the sub-module of $V$ generated by $(e_{k})_{k\geq n}$ (which is obviously free over $(e_{k})_{k\geq n}$). It is a $R$-linear induction algebra on its own $(V_n,e_n,U)$ (since $U\colon V_n\rightarrow V_{n+1}\subseteq V_n$). Therefore, for every $n\in\mathbb{N}$, there exists a unique $R$-linear map, which is a homomorphism of induction algebras, $\mu_n\colon V\rightarrow V_n$ such that $\mu_n(e_0)=e_n$ and $\mu_n(e_{k+1})=\mu_n(U e_k)=U(\mu_n(e_k))$. It is easy to prove by induction that $\mu_n(e_k)=e_{k+n}$. Now, we define $\overline{\mu}\colon V\rightarrow R\mhyphen\mathpzc{Mod}(V,V)$ by $\overline{\mu}(e_n)=\mu_n$ for each $n\geq 0$. Therefore we obtain a bilinear map $\mu_0\colon V\times V\rightarrow V$ given by $\phi(e_m,e_n)=\overline{\mu}(e_m)(e_n)=\mu_m(e_n)=e_{m+n}$. Finally this leads to the existence of a linear map $\mu\colon V\otimes_R V\rightarrow V$ defined by $\mu(e_m\otimes e_n)=e_{m+n}$. A simple calculation shows that $\mu$ is associative (in the sense that $\mu(\mu(u\otimes v)\otimes w)=\mu(u\otimes\mu(v\otimes w))$ for every $u,v,w\in V$ and not only for basis elements) and $\mu(v\otimes e_0)=v=\mu(e_0\otimes v)$ for every $v\in V$. This means that $V$ becomes a monoid, and more precisely an $R$-algebra (an internal monoid in the category of $R$-modules, see~\cite{MacLane}). We see below another way to build this $R$-algebra structure on $V$. 
\begin{remark}\label{irregular-laws}
Similarly it is also possible to define the free linear extension of the usual multiplication on $\mathbb{N}$ to a linear map $\mu ^{\prime}\colon V\otimes V\rightarrow$ by $\mu^{\prime}(e_m\otimes e_n)=e_{mn}$, which happens to be associative and has a unit $e_1$. But $e_0$ is not an absorbing element: for instance $\mu^{\prime}((\alpha e_m + \beta e_n)\otimes e_0)=\alpha\mu^{\prime}(e_m\otimes e_0)+\beta\mu ^{\prime}(e_n\otimes e_0)=(\alpha+\beta)e_0\not=e_0$ whenever $\alpha+\beta\not=0$ (in $R$). It is due to the fact that the equation $x_1\times 0=0$ or $0\times x_1=0$ is not a regular law. Similarly, even if we have $\mu^{\prime}(e_m\otimes\mu(e_n\otimes e_p))=\mu(\mu^{\prime}(e_m\otimes e_n)\otimes \mu^{\prime}(e_m\otimes e_p))$, the distributivity law does not hold for any $u,v,w\in V$ (again essentially because it is not a regular law). 
\end{remark}
\subsection{A free monoid structure and its links with classical algebra}

We also know that $(\mathbb{N},+,0)$ is the free monoid $\{\,1\,\}^*$ over $\{\,1\,\}$. Therefore, again by lemma~\ref{freelinearalgebra}, $(V,\mu,e_0)$ is the free monoid over $\{\, 1\,\}$, or over the module $R$, or over the monoid $(\mathbb{N},+,0)$, where $\mu\colon V\otimes_R V\rightarrow V$ is the $R$-linear map given by $\mu(e_m\otimes e_n)=e_{m+n}$ (it satisfies $\mu(\mu(u\otimes v)\otimes w)=\mu(u\otimes\mu(v\otimes w))$ for every $u,v,w\in V$, and $\mu(v\otimes e_0)=v=\mu(e_0\otimes v)$ for every $v\in V$). Therefore $(V,\mu,e_0)$ has a structure of commutative $R$-algebra which is actually the same as that defined in subsection~\ref{initial}. Moreover it is nothing else than the usual algebra of polynomials $R[x]$ in one indeterminate (an isomorphism is given by $e_n\mapsto x^n$). The fact that $(V,\mu,e_0)$ is the free monoid over $R$ is also re-captured by the fact that $R[x]$ may be seen as the tensor $R$-algebra generated by $Rx\cong R$ (see~\cite{BouAlg}). Finally the fact that $(V,\mu,e_0)$ is free over the monoid $(\mathbb{N},+,0)$ is recovered in the usual algebraic setting by the fact that qua a $R$-algebra $V$ (and therefore $R[x]$) is isomorphic to the $R$-algebra of the monoid $\mathbb{N}$. 
\begin{remark}
According to the remark~\ref{irregular-laws}, there is no hope to use the multiplication from $\mathbb{N}$ in order to define a structure of ring on $V$ internal to the category of modules.
\end{remark}
\subsection{Linear primitive recursion operator}

Back to the fact that $V$ is the initial $R$-linear induction algebra, we show here how to define linear maps by primitive recursion in a way similar to the usual clone of primitive recursive functions (see for instance~\cite{Soare}). Recall that given two maps $g\colon \mathbb{N}^k\rightarrow \mathbb{N}$ and $h\colon \mathbb{N}^{k+2}\rightarrow \mathbb{N}$ it is possible to define a unique map $R(g,h)=f\colon \mathbb{N}^{k+1}\rightarrow \mathbb{N}$ by \emph{primitive recursion} as $f(0,n_1,\cdots,n_k)=g(n_1,\cdots,n_k)$ and $f(n+1,n_1,\cdots,n_k)=h(n_1,\cdots,n_k,n,f(n_1,\cdots,n_k))$ for every $n_1,\cdots,n_k,n\in\mathbb{N}$.  
If $W$ is a $R$-module, then $W^{\otimes n}$ is the tensor product $\underbrace{W\otimes_R \cdots\otimes_R W}_{n\ \mathit{times}}$ (so that $W^{\otimes 0}\cong R$). Now, any set-theoretic map $f\colon \mathbb{N}^{\ell}\rightarrow \mathcal{U}(V)$ gives rise to a unique $R$-linear map $\widehat{f}\colon V^{\otimes \ell}\rightarrow V$ by $\widehat{f}(e_{n_1}\otimes \cdots \otimes e_{n_{\ell}})=f(n_1,\cdots,n_{\ell})$. Therefore given $g\colon \mathbb{N}^k\rightarrow V$ and $h\colon \mathbb{N}^{k+2}\rightarrow V$, there exists a unique $R$-linear map $\widehat{R}(g,h)\colon V^{\otimes k+1}\rightarrow V$ by $\widehat{R}(g,h)(e_{n_1}\otimes \cdots\otimes e_{n_{k+1}})=R(g,h)(n_1,\cdots, n_{k+1})$ and thus by $\widehat{R}(g,h)(e_0\otimes e_{n_1}\otimes \cdots \otimes e_{n_k})=g(n_1,\cdots,n_k)=\widehat{g}(e_{n_1}\otimes\cdots \otimes e_{n_k})$ and 
$\widehat{R}(g,h)(e_{n+1}\otimes e_{n_1}\otimes \cdots \otimes e_{n_k})=h(n_1,\cdots,n_k,n,R(g,h)(n,n_1,\cdots,n_k))
=\widehat{h}(e_{n_1}\otimes\cdots\otimes e_{n_k}\otimes e_n\otimes \widehat{R}(g,h)(e_n\otimes e_{n_1}\otimes \cdots \otimes e_{n_k}))$
for each $n,n_1,\cdots,n_k,n_{k+1}\in\mathbb{N}$. The following result is then proved.
\begin{theorem}[Linear primitive recursion]
Let $g\in \mathpzc{Set}(\mathbb{N}^k,V)$, $h\in \mathpzc{Set}(\mathbb{N}^{k+2},V)$. Then there exists a unique linear map $\phi\colon V^{\otimes k+1}\rightarrow V$ such that  $\phi(e_0\otimes e_{n_1}\otimes \cdots\otimes e_{n_k})=\widehat{g}(e_{n_1}\otimes \cdots\otimes e_{n_k})$ and $\phi(e_{n+1}\otimes e_{n_1}\otimes\cdots\otimes e_{_k})=\widehat{h}(e_{n_1}\otimes\cdots\otimes e_{n_k}\otimes e_n\otimes \phi(e_n\otimes e_{n_1}\otimes \cdots \otimes e_{n_k}))$ for every $n,{n_1},\cdots, {n_k}\in \mathbb{N}$.
\end{theorem}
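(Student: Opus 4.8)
The plan is to reduce the statement to ordinary set-theoretic primitive recursion followed by the linearization principle recalled just before the statement. Note first that the two displayed equations constrain $\phi$ only on the family $\{\, e_{n_0}\otimes e_{n_1}\otimes\cdots\otimes e_{n_k}\colon n_0,n_1,\cdots,n_k\in\mathbb{N}\,\}$, which by Lemma~\ref{multi-to-linear} is a basis of $V^{\otimes k+1}$; since an $R$-linear map is determined by its values on a basis, the whole theorem amounts to producing exactly one family of elements of $V$ indexed by this basis and satisfying the recursion, and then taking $\phi$ to be its unique linear extension.

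To build this family, I would fix the parameters $(n_1,\cdots,n_k)\in\mathbb{N}^k$ and define a $V$-valued sequence $(v_n)_{n\in\mathbb{N}}$ by $v_0=g(n_1,\cdots,n_k)$ and $v_{n+1}=\widehat{h}(e_{n_1}\otimes\cdots\otimes e_{n_k}\otimes e_n\otimes v_n)$. The essential subtlety---and the point I expect to be the main obstacle---is that this is not a bare iteration: the step depends explicitly on the index $n$ through the factor $e_n$, and, more importantly, the value fed back lies in $V$ rather than in $\mathbb{N}$, so one cannot literally invoke the integer-valued recursion combinator $R(g,h)$. The clean remedy is to apply the recursion theorem for the initial induction algebra $\mathbb{N}$ to the set-theoretic induction algebra $(\mathbb{N}\times\mathcal{U}(V),(0,g(n_1,\cdots,n_k)),T)$ whose successor is $T(n,v)=(n+1,\widehat{h}(e_{n_1}\otimes\cdots\otimes e_{n_k}\otimes e_n\otimes v))$. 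Initiality yields a unique homomorphism $n\mapsto(a_n,v_n)$; an immediate induction gives $a_n=n$, so $(v_n)_n$ is exactly the desired sequence and is unique. Letting the parameters vary, these sequences assemble into a single set map $F\colon\mathbb{N}^{k+1}\rightarrow\mathcal{U}(V)$.

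Next I would linearize. By Lemma~\ref{multi-to-linear} one has $V^{\otimes k+1}\cong R(\mathbb{N}^{k+1})$, so the set map $F$ extends uniquely to an $R$-linear map $\phi:=\widehat{F}\colon V^{\otimes k+1}\rightarrow V$ with $\phi(e_{n_0}\otimes\cdots\otimes e_{n_k})=F(n_0,\cdots,n_k)$. Reading the two defining equations of $F$ off on basis elements returns precisely the two equations of the statement, using $g(n_1,\cdots,n_k)=\widehat{g}(e_{n_1}\otimes\cdots\otimes e_{n_k})$ by definition of $\widehat{g}$; this establishes existence (and shows $\phi$ is nothing but the $\widehat{R}(g,h)$ of the preceding discussion).

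For uniqueness, suppose $\phi^{\prime}$ is another $R$-linear map satisfying both equations. Evaluated on the basis, the first equation pins down $\phi^{\prime}$ on all tensors whose first factor is $e_0$, while the second expresses $\phi^{\prime}(e_{n+1}\otimes\cdots)$ in terms of $\phi^{\prime}(e_n\otimes\cdots)$; hence by induction on the first index $\phi^{\prime}$ agrees with $\phi$ on every basis element, and linearity forces $\phi^{\prime}=\phi$. This is simply the uniqueness half of the recursion used to construct $F$, transported through the unique linear extension.
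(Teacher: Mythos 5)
Your proof is correct and follows essentially the same route as the paper: define the values on the basis $(e_{n_0}\otimes\cdots\otimes e_{n_k})$ by a set-theoretic primitive recursion on the first index (feeding the previous value through the linearization $\widehat{h}$), extend by $R$-linearity via lemma~\ref{multi-to-linear}, and get uniqueness by induction on the first index together with linearity. Your only addition is the explicit justification of the $V$-valued, index-dependent recursion via initiality of $\mathbb{N}$ applied to $\mathbb{N}\times\mathcal{U}(V)$, a point the paper passes over by simply invoking the primitive recursion combinator $R(g,h)$.
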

\begin{remark}
The two $R$-linear maps $\mu$ and $\mu^{\prime}$ from subsection~\ref{initial} may be obtained by linear primitive recursion.
\end{remark}
In order to close this subsection, let us briefly see the corresponding notion of clone of primitive recursive functions in the linear case. 
Let $f\colon R\mhyphen\mathpzc{Mod}(V^{\otimes m},V)$, and $g_1\cdots,g_m\in R\mhyphen\mathpzc{Mod}(V^{\otimes n},V)$, then the \emph{superposition} $\mu(f,g_1,\cdots,g_m)$ in $R\mhyphen\mathpzc{Mod}(V^{\otimes n},V)$ is defined by 
$\mu(f,g_1,\cdots,g_m)(e_{i_1}\otimes \cdots \otimes e_{i_n})=f(g_1(e_{i_1}\otimes \cdots \otimes e_{i_n})\otimes \cdots \otimes g_n(e_{i_1}\otimes \cdots \otimes e_{i_n}))$  
for every $e_{i_1},\cdots, e_{i_n}\in V$. For every $n$, $i=1,\cdots,n$, we define the \emph{projections} $\pi_i^{(n)}\in R\mhyphen\mathpzc{Mod}(V^{\otimes n},V)$ by $\pi_i^{(n)}(e_{j_1}\otimes \cdots \otimes e_{j_n})=e_{j_i}$ for every $j_1,\cdots,j_n\in \mathbb{N}$. Then the clone of all linear primitive recursive functions is the set of all $R$-linear maps from $V^{\otimes k}$, for varying $k$, to $V$ which is closed under superposition, and linear primitive recursion (in the sense that if $g\colon V^{\otimes k}\rightarrow V$ and $h\colon V^{\otimes k+2}\rightarrow V$ are primitive recursion linear maps, then $\widehat{R}(g_0,h_0)$ is linear primitive recursive, where $g_0\colon \mathbb{N}^k\rightarrow V$ and $h_0\colon \mathbb{N}^{k+2}\rightarrow V$ are the unique maps such that $g=\widehat{g}_0$ and $h=\widehat{h}_0$), that contains, for every set-theoretic primitive recursive function $f\in\mathbb{N}^k\rightarrow \mathbb{N}$, the map $\widetilde{f}\colon V^{\otimes k}\rightarrow V$ where $\widetilde{f}(e_{i_1}\otimes \cdots \otimes e_{i_k})=e_{f(i_1,\cdots,i_k)}$ for all $(i_1,\cdots,i_k)\in\mathbb{N}^k$, and that contains the projections. 

\section{A normal form for $R$-linear endomorphisms of $V$}\label{normal-form}

In~\cite{Duchamp} the authors generalize a result from~\cite{Kurbanov} that concerns the decomposition of linear endomorphisms of $V$ (in~\cite{Duchamp} only the case where $R$ is a field is considered) with respect to  a pair of raising and  lowering ladder operators. In the present paper, after recalling this result in a more general setting, we show that it may be seen as a strong version of Jacobson's density theorem and that it gives rise to a unique normal form for the endomorphisms of $V$ in a way made precise hereafter. 

\subsection{Jacobson's density theorem}

Jacobson's density theorem is a result made of two parts: an algebraic and a topological one. Let us begin with definitions needed for the algebraic part. Let $R$ be a unitary ring (commutative or not). If $M$ is a left $R$-module, then we denote by $\nu\colon R\rightarrow \mathpzc{Ab}(M)$ the associated (module) structure map (where $\mathpzc{Ab}$ denotes the category of all Abelian groups). This is a ring map since it is a linear representation of $R$. A left $R$-module $M$ is said to be a \emph{faithful module} if the structure map is one-to-one, \emph{i.e.}, $\ker \nu=(0)$. A left $R$-module $M$ is said to be a \emph{simple module} if it is non-zero and it has no non-trivial sub-modules (modules different from $(0)$ and $M$ itself). Finally, a ring $R$ is said to be \emph{(left-)primitive} if it admits a faithful simple left-module. Now, let us turn to the topological part. Given two topological spaces $X,Y$, we let $\mathpzc{Top}(X,Y)$ be the set of all continuous maps from $X$ to $Y$ (here $\mathpzc{Top}$ denotes the category of all topological spaces). Let $K$ be a compact subset of $X$ and $U$ be an open set in $Y$, then we define $V(K,U)=\{\, f\in \mathpzc{Top}(X,Y)\colon f(K)\subseteq U\,\}$. 
The collection of all such sets $V(K,U)$ (with varying $K$ and $U$) forms a subbasis for the \emph{compact-open topology} on $\mathpzc{Top}(X,Y)$. This means that for every non-void open set $V$ in the compact-open topology, and every $f\in V$, there exist compact sets $K_1,\cdots,K_n$ of $X$ and open sets $U_1,\cdots,U_n$ in $Y$ such that $f\in \bigcap_{i=1}^n V(K_i,U_i)\subseteq V$, see~\cite{Arens1,Eilenberg}.
\begin{remark}
Let $R$ be a ring (commutative or not), and let $M$ be a left module over $R$. Let us assume that $M$ has the discrete topology. Therefore its compact subsets are exactly its finite subsets. Then, the compact-open topology induced by $\mathpzc{Top}(M,M)=M^M$ on the sub-space of all $R$-linear endomorphisms $R\mhyphen\mathpzc{Mod}(M,M)$ of $M$ is the same as the topology of simple convergence (here $R\mhyphen\mathpzc{Mod}$ is the category of all left $R$-modules), \emph{i.e.} for every topological space $X$, a map $\phi\colon X\rightarrow R\mhyphen\mathpzc{Mod}(M,M)$ is continuous if, and only if, for every $v\in M$, the map $\phi_v\colon x\in X\mapsto \phi(x)(v)\in M$ is continuous. Moreover with this topology, and $R$ discrete, $R\mhyphen\mathpzc{Mod}(M,M)$ is a Hausdorff complete topological $R$-algebra (\cite{Warner}).
\end{remark}
We are now in position to state Jacobson's density theorem (see~\cite{Farb} for a proof). 
\begin{theorem}[Jacobson's density theorem]
Let $R$ be a unitary ring (commutative or not). The ring $R$ is primitive if, and only if, it is a dense subring (in the compact-open topology) of a ring $\mathbb{D}\mhyphen\mathpzc{Mod}(M,M)$ of linear endomorphisms of some (left) vector space $M$ over a division ring $\mathbb{D}$ (where $M$ is discrete). 
\end{theorem}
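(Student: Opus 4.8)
The plan is to prove both implications, with essentially all the content lying in the forward direction, whose core is an algebraic density statement that I then transport to the compact-open (equivalently, simple-convergence) topology.

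Assume first that $R$ is primitive, witnessed by a faithful simple left $R$-module $M$ with structure map $\nu\colon R\rightarrow \mathpzc{Ab}(M)$. The first step is Schur's lemma: since $M$ is simple, every nonzero element of $\mathbb{D}:=R\mhyphen\mathpzc{Mod}(M,M)$ is an $R$-linear bijection, hence invertible, so $\mathbb{D}$ is a division ring and $M$ acquires the structure of a vector space over $\mathbb{D}$. Because each $\nu(r)$ commutes with every $R$-endomorphism of $M$, it is $\mathbb{D}$-linear, so $\nu$ factors through $\mathbb{D}\mhyphen\mathpzc{Mod}(M,M)$; faithfulness, i.e. $\ker\nu=(0)$, makes $\nu$ an injective ring homomorphism, realizing $R$ as a subring of $\mathbb{D}\mhyphen\mathpzc{Mod}(M,M)$.

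It then remains to show this subring is dense for the topology of simple convergence. A basic neighborhood of some $f\in\mathbb{D}\mhyphen\mathpzc{Mod}(M,M)$ is determined by the values of $f$ on a finite subset of $M$, and any finite subset lies in the $\mathbb{D}$-span of a finite $\mathbb{D}$-linearly independent family; hence density reduces to the purely algebraic assertion that for $\mathbb{D}$-independent $v_1,\dots,v_n\in M$ and arbitrary $w_1,\dots,w_n\in M$, there exists $r\in R$ with $\nu(r)(v_i)=w_i$ for all $i$. I would prove this by induction on $n$, the crux being the lemma that if $A=\{\,r\in R\colon \nu(r)(v_j)=0,\ j<n\,\}$ is the left ideal annihilating $v_1,\dots,v_{n-1}$, then $\nu(A)(v_n)=M$. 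Indeed $\nu(A)(v_n)$ is an $R$-submodule of the simple module $M$, hence equals $(0)$ or $M$; were it $(0)$, one could use the assignment $\nu(r)(v_1),\dots,\nu(r)(v_{n-1})\mapsto \nu(r)(v_n)$ to manufacture a well-defined $\mathbb{D}$-linear map exhibiting $v_n$ as a $\mathbb{D}$-combination of $v_1,\dots,v_{n-1}$, contradicting independence. Granting the lemma, I produce for each index an $r_i$ with $\nu(r_i)(v_i)=w_i$ and $\nu(r_i)(v_j)=0$ for $j\neq i$, and take $r=\sum_i r_i$. The \emph{main obstacle} is exactly this lemma: verifying that the exhibited witness to a dependency is well defined and genuinely $\mathbb{D}$-linear, which is where the simplicity of $M$ and the division-ring structure of $\mathbb{D}$ are indispensable.

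Conversely, suppose $R$ is a dense subring of $\mathbb{D}\mhyphen\mathpzc{Mod}(M,M)$ for some $\mathbb{D}$-vector space $M$, and regard $M$ as a left $R$-module through the inclusion. Faithfulness is immediate, since the inclusion is an injective ring map and an element acting as $0$ on all of $M$ is the zero endomorphism. For simplicity, fix $0\neq v\in M$ and an arbitrary $w\in M$; the singleton $\{\,v\,\}$ is $\mathbb{D}$-independent, so choosing any $f\in\mathbb{D}\mhyphen\mathpzc{Mod}(M,M)$ with $f(v)=w$, density yields $r\in R$ with $\nu(r)(v)=w$. Thus $Rv=M$ for every nonzero $v$, so $M$ has no proper nonzero submodule and is simple. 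Hence $R$ admits a faithful simple module and is therefore primitive, which completes the argument.
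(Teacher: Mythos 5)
The paper does not actually prove this theorem: it is quoted as a known result and the proof is delegated to the reference [Farb--Dennis, \emph{Noncommutative algebra}], so there is no in-paper argument to compare yours against. Your proposal is the standard textbook proof found in that reference, and it is essentially correct: Schur's lemma to get the division ring $\mathbb{D}$, reduction of density (in the topology of simple convergence, which the paper's preceding remark identifies with the compact-open topology for discrete $M$) to the algebraic statement about prescribing values on a $\mathbb{D}$-independent family, the annihilator-ideal lemma proved via simplicity, and the easy converse. Two spots where the sketch is thinnest and would need care in a full write-up: (i) in the key lemma, the assignment $(\nu(r)(v_1),\dots,\nu(r)(v_{n-1}))\mapsto\nu(r)(v_n)$ is an $R$-module homomorphism (not literally ``$\mathbb{D}$-linear''), and to conclude that $v_n=\sum_i d_i(v_i)$ with $d_i\in\mathbb{D}$ you must either invoke the inductive density statement to see that its domain is all of $M^{n-1}$, or extend it from the submodule $\{(\nu(r)(v_1),\dots,\nu(r)(v_{n-1}))\colon r\in R\}$ using semisimplicity of $M^{n-1}$, before applying $R\mhyphen\mathpzc{Mod}(M^{n-1},M)\cong\mathbb{D}^{n-1}$; (ii) in the converse, simplicity requires $M\neq(0)$, which should be noted. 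Neither is a genuine gap in the intended argument.
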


\subsection{Decomposition of endomorphisms}\label{decomp}

A direct consequence of Jacobson's density theorem is the following. Let $\mathbb{K}$ be a field of characteristic zero, and $A(\mathbb{K})$ be the \emph{Weyl algebra} which is the quotient algebra of the free algebra $\mathbb{K}\langle x,y\rangle$ in two non-commutative variables by the two-sided ideal generated by $xy-yx-1$ (this means that although the generators of $A(\mathbb{K})$ do not commute  their commutator is equal to $1$). (See~\cite{Connell} for more details.) Now, $A(\mathbb{K})$ is a primitive ring by Jacobson's density theorem. Indeed, $A(\mathbb{K})$ admits a faithful representation into $\mathbb{K}\mhyphen\mathpzc{Mod}(\mathbb{K}[z],\mathbb{K}[z])$ by $[x]\mapsto (P(z)\mapsto zP(z))$ and $[y]\mapsto (P(z)\mapsto \frac{d}{dz}P(z))$ (where $P(z)$ denotes an element of $\mathbb{K}[z]$,  $[x],[y]$ are the canonical images of $x,y$ onto $A(\mathbb{K})$, and it is clear that the commutation relation is preserved by this representation), and it is an easy exercise to check that through this representation $A(\mathbb{K})$ is a dense subring of $\mathbb{K}\mhyphen\mathpzc{Mod}(\mathbb{K}[z],\mathbb{K}[z])$ (under the topology of simple convergence with $\mathbb{K}[z]$ discrete). Nevertheless given  $\phi\in \mathbb{K}\mhyphen\mathpzc{Mod}(\mathbb{K}[z],\mathbb{K}[z])$ and an open neighborhood $V$ of $\phi$, Jacobson's density theorem does not provide any effective nor even constructive way to build some $\phi_0\in A(\mathbb{K})$ such that $\phi_0\in V$.  In~\cite{Kurbanov} the authors show how to build in a recursive way a sequence of operators $(\Omega_n)_{n\in\mathbb{N}}$, $\Omega_n\in A(\mathbb{K})$ for each $n$, such that $\lim_{n\rightarrow\infty}\Omega_n=\phi$. In~\cite{Duchamp} the authors generalize this result to the case of $\mathbb{K}$-linear endomorphisms of $V$, with $\mathbb{K}$ any field (of any characteristic), proving that the multiplicative structure of the algebra $\mathbb{K}[z]$ is unnecessary (recall that as $\mathbb{K}$-vector spaces, $V\cong \mathbb{K}[z]$). We now recall this result in a more general setting where a commutative ring $R$ with unit replaces the field $\mathbb{K}$. Let $(e_n)_{n\in\mathbb{N}}$ be a basis of $V=R\mathbb{N}$. We define a $R$-linear map $D\colon V\rightarrow V$ by $D(e_0)=0$ and $D(e_{n+1})=e_n$ for every $n\in \mathbb{N}$. (This linear map $D$ may be given a definition by linear primitive recursion as $D=\widehat{R}(0,\pi_2^{(1)})$.) According to~\cite{Poinsot} (see page 109), for any sequence $(\phi_n)_{n\in \mathbb{N}}$ with $\phi_n\in R\mhyphen\mathpzc{Mod}(V,V)$, the family $(\phi_n \circ D^n)_{n\in\mathbb{N}}$ is summable in the topology of simple convergence of $R\mhyphen\mathpzc{Mod}(V,V)$ (where, for every endomorphism $\phi$ of $V$, $\phi^0=id_V$ and $\phi^{n+1}=\phi\circ \phi^n$). This means that there is an element of $R\mhyphen\mathpzc{Mod}(V,V)$ denoted by $\sum_{n\in\mathbb{N}}\phi_n \circ D^n$, and called the \emph{sum} of $(\phi_n\circ D^n)_{n\geq 0}$, such that for every $v\in V$, $v\not=0$, $\left (\sum_{n\in\mathbb{N}}\phi_n\circ D^n\right )(v)=\sum_{n=0}^{d(v)}\phi(D^n(v))$, where $d(v)$ is the maximum of all $k$'s such that the coefficient of $e_k$ in the decomposition of $v$ in the basis $(e_n)_{n\geq 0}$ is non-zero.
\begin{remark}
The above summability of $(\phi_n \circ D^n)_{n\in\mathbb{N}}$ essentially comes from topological nilpotence of $D$ in the topology of simple convergence which means that for every $v\in V$, there exists $n_v\in \mathbb{N}$ (for instance $d(v)$ when $v\not=0$) such that for every $n\geq n_v$, $D^n(v)=0$ ($D^n \rightarrow 0$ in the topology of simple convergence).
\end{remark}
For every polynomial $P(x)=\sum_{n=0}^m p_n x^n\in R[x]$, every sequence $\mathbf{v}=(v_n)_{n\in\mathbb{N}}$ of elements of $V$, and every $R$-linear endomorphism $\phi$ of $V$, we define $P(\mathbf{v})=\sum_{n=0}^m p_n v_n\in V$, and $P(\phi)=\sum_{n=0}^m p_n \phi^n\in R\mhyphen\mathpzc{Mod}(V,V)$. It is clear that $P(x)\in R[x]\mapsto P(\mathbf{e})\in V$ for $\mathbf{e}=(e_n)_{n\in\mathbb{N}}$ defines a linear isomorphism between $R[x]$ and $V$. Moreover we have $P(\mathbf{e})=P(U)(e_0)$. Now, let $\phi$ be given. There exists a sequence of polynomials $(P_n(x))_n$ such that $\phi=\sum_{n\in\mathbb{N}}P_n(U)\circ D^n$ (this means that $\phi$ is the sum of the summable family $(P_n(U)\circ D^n)_{n\geq 0}$ and it is equivalent to $\phi(e_n)=\sum_{k=0}^nP_k(U)(D^k(e_n))$ for each $n\in\mathbb{N}$, because $D^k(e_n)=0$ for every $k>n$). This can be proved by induction on $n$ as follows. We have $\phi(e_0)=P_0(\mathbf{e})=P_0(U)(e_0)$ for a unique $P_0(x)\in R[x]$. Let us assume that there are $P_1(x),\cdots,P_n(x)\in R[x]$ such that $\phi(e_n)=\sum_{k=0}^n P_k(U)D^k(e_n)=\sum_{k=0}^n P_k(U)e_{n-k}$. Let $P_{n+1}(U)(e_0)=P_{n+1}(\mathbf{e})=\phi(e_{n+1})-\sum_{k=0}^n P_k(U)e_{n+1-k}$ ($P_{n+1}$ is uniquely determined). Then, $\phi(e_{n+1})=\sum_{k=0}^{n+1}P_k(U)\circ D^k (e_{n+1})$. 
\begin{remark}
This result is outside the scope of Jacobson's density theorem since $R$ is not a division ring, and also more precise since it provides a recursive algorithm to construct explicitly a sequence that converges to any given endomorphism.
\end{remark}
Every sequence $(P_n)_n$ defines an endomorphism $\phi$ given by the sum of $(P_n(U)\circ D^n)_n$, and the above construction applied to $\phi$ recovers the  sequence $(P_n)_n$. The correspondence between $\phi$ and $(P_n)_n$ as constructed above is functional, and it is actually  a $R$-linear map ($R[x]^{\mathbb{N}}$ is the product $R$-module), onto and one-to-one. 

\subsection{A normal form for $R$-linear endomorphisms of $V$}\label{normalform}

Let us consider the following subset of the $R$-algebra of non-commutative series $R\langle\langle x,y\rangle\rangle$ in two variables (see~\cite{BouAlg}): $R\langle x,y\rangle\rangle=\{\, \sum_{n\geq 0}P_n(x)y^n\colon \forall n,\ P_n(x)\in R[x]\,\}$. This is a $R$-sub-module of $R\langle\langle x,y\rangle\rangle$, and a $R[x]$-module with action given by $Q(x)\cdot (\sum_{n\geq 0}P_n(x)y^n)=\sum_{n\geq 0}(Q(x)P_n(x))y^n=(\sum_{n\geq 0}P_n(x)y^n)\cdot Q(x)$. (We observe that $xy=y\cdot x$ but $yx$ does not belong to $R\langle x,y\rangle\rangle$. ) According to the result of subsection~\ref{decomp}, there exists a $R$-linear isomorphism $\pi\colon R\langle x,y\rangle\rangle \rightarrow R\mhyphen\mathpzc{Mod}(V,V)$ which maps $\sum_{n\geq 0}P_n(x)y^n$ to $\sum_{n\in\mathbb{N}}P_n(U)\circ D^n$.
\begin{remark}
It is essential that $xy\not=yx$, otherwise $\pi(xy)=U\circ D\not=id_V=D\circ U=\pi(yx)$, and $\pi$ would be ill-defined. 
\end{remark}
For any $\phi\in R\mhyphen\mathpzc{Mod}(V,V)$, the unique $S=\sum_{n\geq 0}P_n(x)y^n\in R\langle x,y\rangle\rangle$ such that $\pi(S)=\phi$ should be called the \emph{normal form} $s(\phi)$ of $\phi$ for a reason made clear hereafter. We observe that any set-theoretic map $\phi\colon \mathbb{N}\rightarrow \mathbb{N}$ also has such a normal form through the natural isomorphism $\mathpzc{Set}(\mathbb{N},\mathbb{N})\cong R\mhyphen\mathpzc{Mod}(V,V)$.  
\begin{example}
\begin{enumerate}
\item Let us assume that $R$ contains $\mathbb{Q}$ as a sub-ring. Let us consider the \emph{formal integration} operator $\int$ on $V$ defined by $\int e_n=\frac{e_{n+1}}{n+1}$ for every integer $n$. Then, $s(\int)=\sum_{n\geq 0}(-1)^{n}\frac{x^{n+1}}{(n+1)!}y^{n}$ (by recurrence).
\item Since the commutator $[D,U]=D\circ U-U\circ D=id_V-U\circ D$, we obtain $s([D,U])=1-xy$. 
\end{enumerate}
\end{example}
Let $\pi_0(x)=U$, $\pi_0(y)=D$,  and $\widehat{\pi}\colon \{\, x,y\,\}^*\rightarrow R\mhyphen\mathpzc{Mod}(V,V)$ be the unique monoid homomorphism extension of $\pi_0$ (where $R\mhyphen\mathpzc{Mod}(V,V)$ is seen as a monoid under composition). 
Let $R\{\{x,y\}\}$ be the set of all series $S=\sum_{w\in \{\, x,y\,\}^*}\alpha_w w$ in $R\langle \langle x,y\rangle\rangle$ such that the family $(\alpha_w \widehat{\pi}(w))_{w\in \{\, x,y\,\}^*}$ of endomorphisms of $V$ is summable. 
\begin{example}
Let us consider the series $S=\sum_{n\geq 0}y^nx^n\in R\langle\langle x,y\rangle\rangle$. Then, $S\not\in R\{\{x,y\}\}$ since $\widehat{\pi}(y^nx^n)=\pi_0(y)^n\circ\pi_0(x)^n=D^n\circ U^n=id_V$ for each $n$. Whereas $S^{\prime}=\sum_{n\geq 0}x^ny^n\in R\{\{x,y\}\}$ since $\sum_{n\geq 0}U^nD^n$ is equal to the operator $e_n\mapsto (n+1)e_n$. 
\end{example}
From general properties of summability~\cite{Warner}, $R\{\{x,y\}\}$ is a sub $R$-algebra of $R\langle\langle x,y\rangle\rangle$, and the homomorphism of monoids $\widehat{\pi}$ may be extended to an algebra map $\widetilde{\pi}\colon R\{\{x,y\}\}\rightarrow R\mhyphen\mathpzc{Mod}(V,V)$ by $\widetilde{\pi}(\sum_{w}\alpha_w w)=\sum_{w}\alpha_w \widehat{\pi}(w)$ which is obviously onto, so that $R\mhyphen\mathpzc{Mod}(V,V)\cong \EnsembleQuotient{R\{\{x,y\}\}}{\ker \widetilde{\pi}}$ (as $R$-algebras). We have $\widetilde{\pi}(s(\phi))=\phi$, so that $s$ defines a linear section of $\widetilde{\pi}$.  Let $\mathpzc{N}\colon R\{\{x,y\}\}\rightarrow R\langle x,y\rangle\rangle$ be the $R$-linear map defined by $\mathpzc{N}(S)=s(\widetilde{\pi}(S))$. Then, for every $S,S^{\prime}\in R\{\{x,y\}\}$, $S\cong {S^{\prime}}\bmod{\ker\widetilde{\pi}}$ (\emph{i.e.}, $\widetilde{\pi}(S)=\widetilde{\pi}(S^{\prime})$) if, and only if, $\mathpzc{N}(S)=\mathpzc{N}(S^{\prime})$. Also it holds that $\mathpzc{N}(\mathpzc{N}(S))=S$. The module of all normal forms $R\langle x,y\rangle\rangle$ inherits a structure of $R$-algebra by $S*S^{\prime}=\mathpzc{N}(SS^{\prime})=s(\widetilde{\pi}(SS^{\prime}))=s(\widetilde{\pi}(S)\circ \widetilde{\pi}(S^{\prime}))$ isomorphic to $R\mhyphen\mathpzc{Mod}(V,V)\cong \EnsembleQuotient{R\{\{x,y\}\}}{\ker \widetilde{\pi}}$.  
\begin{example}
We have $y*x=1$ while $x*y=xy$, so that $[y,x]=y*x-x*y=1-xy$. Let us define the operator $\partial$ on $V$ by $\partial e_{n+1}=(n+1)e_n$ for each integer $n$ and $\partial e_0=0$. Then, we have $s(\partial)=\sum_{n\geq 1}x^{n}y^{n-1}$. Moreover, $[\partial,U]=\partial\circ U-U\circ \partial=id_V$. It follows that $[s(\partial),x]=1$.  Let $A(R)$ be the quotient algebra $R\langle x,y\rangle$ by the two-sided ideal generated by $xy-yx-1$, namely the \emph{Weyl algebra over $R$}. Therefore there exists a unique morphism of algebras $\phi\colon A(R)\rightarrow R\langle x,y\rangle\rangle$ such that $\phi(x)=x$ and $\phi(y)=s(\partial)$. Composing with  the isomorphism $\pi\colon R\langle x,y\rangle\rangle\rightarrow R\mhyphen\mathpzc{Mod}(V,V)$, we obtain a representation of the algebra $A(R)$ on the module $V$ ($x$ acts on $V$ as $U$ while $y$ acts on $V$ as $\partial$). When $R$ is a field $\mathbb{K}$ of characteristic zero, then this representation is faithful (see~\cite{Bjork}), hence in this case $\mathbb{K}\langle x,y\rangle\rangle$ contains a copy of the Weyl algebra $A(\mathbb{K})$, namely the sub-algebra generated by $x$ and $s(\partial)$. 
\end{example}

\section{Concluding remarks and perspectives}

\subsection{Free linear induction algebras}

Let $X$ be any set. According to section~\ref{basic}, we may define the free $R$-linear induction algebra $V_X$ on $X$. It is isomorphic to the direct product of $|X|+1$ copies of $V$, namely the $R$-module $V\oplus \bigoplus_{x\in X} V$, this is so because the free induction algebra on $X$ is $\{ S^n(0)\colon n\geq 0\}\sqcup\bigsqcup_{x\in X}\{ S^n(x)\colon n\geq 0\}$ (where $\bigsqcup$ is the set-theoretic disjoint sum). As an example, take $X$ finite of cardinal say $n$, then $V_X$ is isomorphic to $V^{n+1}$. In this finite case, we have $R\mhyphen\mathpzc{Mod}(V^{n+1},V^{n+1})\cong R\mhyphen\mathpzc{Mod}(V,V)^{(n+1)^2}\cong R\langle x,y\rangle\rangle^{(n+1)^2}$. From subsection~\ref{normalform} it follows that any endomorphism of $V^{n+1}$ may be written as a vector of length $(n+1)^2$ or better a $(n+1)\times (n+1)$ matrix with entries some members of $R\langle x,y\rangle\rangle$. More generally, for each integers $m,n$, we have $R\mhyphen\mathpzc{Mod}(V^m,V^n)\cong R\mhyphen\mathpzc{Mod}(V,V)^{mn}$ so that we have obtained a complete description of all linear maps between spaces of the form $V^n$ in terms of the basic operators $U$ and $D$. 

\subsection{Links with Sheffer sequences}
It is not difficult to check that we may define a new associative multiplication on $R\langle x,y\rangle\rangle$, and therefore also on $R\mhyphen\mathpzc{Mod}(V,V)$, by $$\left(\sum_{n\geq 0}P_n(x)y^n\right)\#\left(\sum_{n\geq 0}Q_n(x)y^n\right)=\sum_{n\geq 0}\left(\sum_{k\geq 0}\langle P_n(x)\mid x^k \rangle Q_k(x)\right)y^n$$ where $\langle P(x)\mid x^k\rangle$ denotes the coefficient of $x^k$ in the polynomial $P(x)$ (so that in the above formula the sum indexed by $k$ is actually a sum with a finite number of non-zero terms for each $n$), with a two-sided identity $\sum_{n\geq 0}x^ny^n$ (that corresponds to the operator $e_n\mapsto (n+1)e_n$ of $V$). This product is a generalization of the so-called umbral composition~\cite{Roman}. Let us assume that $\mathbb{K}$ is a field of characteristic zero. 
Following~\cite{Rota} (see also~\cite{Poinsot2013}) a sequence  $(p_n(x))_{n\geq 0}$ of polynomials in $\mathbb{K}[x]$ such that the degree of $p_n(x)$ is $n$ for each integer $n$ is called a \emph{Sheffer sequence} if there are two series $\mu(y),\sigma(y)\in \mathbb{K}[[y]]$, where $x$ and $y$ are assumed to be commuting variables, with $\mu(0)\not=0$, $\sigma(0)=0$, and $\sigma^{\prime}(0)\not=0$ (where $\sigma^{\prime}$ denotes the usual derivation of series) such that $\sum_{n\geq 0}p_n(x)\frac{y^n}{n!}=\mu(y)e^{x\sigma(y)}\in\mathbb{K}[[x,y]]$. A series $S=\sum_{n\geq 0}\frac{1}{n!}p_n(x)y^n\in\mathbb{K}\langle x,y\rangle\rangle$ is said to be a \emph{Sheffer series} whenever $(p_n(x))_n$ is a Sheffer sequence. Such series correspond to \emph{Sheffer operators} on $V$ given by $\sum_{n\geq 0}\frac{1}{n!}p_n(U)\circ D^n$. For instance Laguerre's polynomials given by $L_n(x)=\sum_{k=0}^n\binom{n}{k}\frac{(-1)^k}{k!}x^k$ form a Sheffer sequence, and thus $\sum_{n\geq 0}\frac{1}{n!}L_n(U)\circ D^n$ is a Sheffer operator. We observe that the above multiplication $\#$ corresponds to the umbral composition of $(P_n(x))_n$ and $(Q_n(x))_n$. Because Sheffer sequences form a group under umbral composition (see~\cite{Roman}), it follows that Sheffer operators and Sheffer series form an isomorphic group under the corresponding umbral composition. The perspectives of our present contribution concern the study of such operators and their combinatorial properties. 

%
%

\end{document}